\begin{document}
\title{A rolled-off passivity theorem}

\author[1]{Thomas Chaffey}
\ead{tlc37@cam.ac.uk}
\affiliation[1]{organization={University of Cambridge, Department of
Engineering},addressline={Trumpington Street},postcode={CB2~1PZ},city={Cambridge},country={United Kingdom}}

\begin{abstract}
        Given two nonlinear systems which only violate incremental
        passivity when their incremental gains are sufficiently small, 
        we give a condition for their negative feedback interconnection to 
        have finite incremental gain, which generalizes the incremental small gain and incremental
        passivity theorems.  The property may be determined graphically by plotting the
        Scaled Relative Graphs (SRGs) of the systems, which provides engineering significance to the
        mathematical result.
\end{abstract}

\begin{keyword}
        input/output stability \sep incremental passivity \sep Scaled Relative Graphs
\end{keyword}

\maketitle

\section{Introduction}

The small gain and passivity theorems are two of the fundamental pillars of
nonlinear input/output systems theory, originating in a seminal paper of Zames
\cite{Zames1966}.  The former theorem states that the
negative feedback of two systems is stable if the product of their gains is less than
one; the latter guarantees stability if one system is passive and the other is
strongly passive with finite gain.

Zames provides two versions of each theorem.  The first version requires gain and
phase properties to be satisfied with respect to the reference input/output pair
$u_0(t) = 0, y_0(t) = 0$, and guarantees boundedness,
or continuity of the operator at $u_0$.  The second version requires properties to be
satisfied \emph{incrementally}, or with respect to every input/output trajectory, and
guarantees the stronger property of continuity everywhere.  These stronger theorems
are known as the \emph{incremental small gain} and \emph{incremental passivity}
theorems.

The conditions of the small gain and passivity theorems are restrictive, and there
are many feedback systems which are stable, but do not meet the assumptions of either
theorem.  
A common issue in practice is that a system would satisfy the conditions of the
passivity theorem, were it not for high frequency dynamics
destroying passivity. The input/output gain, however, is small at these high
frequencies.  This issue played an important part in the development of adaptive
control - see \cite{Anderson2005} and references therein.  The prevalence of such
systems has motivated several specialized stability results. The LTI \emph{mixed
small gain/passivity} condition of Griggs et al.\@ \cite{Griggs2007a} divides the
frequency spectrum into frequencies at which two systems are passive, frequencies at
which they have small gain, and frequencies at which they satisfy both criteria.
This is generalized directly to nonlinear systems by Forbes and Damaren
\cite{Forbes2010}, using the terminology \emph{hybrid small gain/passivity}, and is connected to the Generalized KYP lemma of Iwasaki and Hara \cite{Iwasaki2005} in reference \cite{Forbes2013}.  The
nonlinear generalization of Griggs et al.\@ \cite{Griggs2009} uses a pair of linear operators to define
a ``blended'' supply rate which represents mixed small gain and passivity.
The \emph{roll-off IQC} was introduced by Summers, Arcak and Packard
\cite{Summers2013} to capture the roll-off of input/output gain at high frequency.  
All these mixed small gain/passivity results are
non-incremental, guaranteeing boundedness of the feedback system, but not continuity.
The secant condition also mixes small gain and passivity, guaranteeing the stability of a cascade of output-strictly passive
systems with a condition on the product of their \emph{secant gains}, which capture
both gain and passivity information \cite{Sontag2006, Arcak2006}.  The secant
condition readily applies to the incremental case \cite{Stan2007}.  Recent work has
also generalized the passivity theorem to infinite dimensional LTI systems
\cite{Zhao2017, Guiver2017}.

The subject of this paper is an incremental \emph{rolled-off passivity} theorem,
which applies to incrementally stable systems which only violate incremental passivity when their
incremental input/output gain is small.  Rather than requiring gain to
roll-off over any particular frequency range, we simply require the gain to roll off
when the phase shift, measured as an angle in signal space, exceeds $\pi/2$.
The idea takes inspiration from the blended supply rate of Griggs et al.\@ \cite{Griggs2009},
however, rather than using a smoothly blended supply, we simply split the 
space of input signals into those pairs of signals where the two systems have incremental small gain,
are incrementally passive, or both.  We call this property \emph{incremental $(\mu,
\gamma)$-dissipativity}.  Unlike the results of references
\cite{Forbes2010, Griggs2009, Griggs2007}, we do not require systems to be
incrementally $(\mu, \gamma)$-dissipative for \emph{the same}
partition of signals. This maintains the ``worst case'' nature of the small gain and
passivity theorems, but simplifies the verification of the property. 
The resulting condition for finite
incremental gain bears a strong resemblance to the classical incremental small gain
condition.

Methods exist for determining when LTI systems satisfy a mixed or hybrid small gain/passivity
property \cite{Forbes2013, Griggs2011}.  Determining whether such properties hold for
a pair of nonlinear systems, however, is often difficult.  In contrast, incremental $(\mu,
\gamma)$-dissipativity can be read directly from the Scaled Relative Graph (SRG) of a
nonlinear system.
SRGs have recently been introduced by Ryu, Hannah and Yin \cite{Ryu2021} for the
study of monotone operator methods in optimization.  The SRG allows incremental
properties of operators, such as Lipschitz continuity and monotonicity, to be
determined graphically, and leads to intuitive and rigorous proofs of convergence
using geometric transformations in the complex plane.  The SRG is particularly
suited to proving tightness of bounds, and novel tightness results have been obtained
\cite{Ryu2021, Huang2020a}.  In reference \cite{Chaffey2021c}, the author and his
colleagues showed that the SRG generalizes the Nyquist diagram of an LTI transfer
function (generalized by Pates
\cite{Pates2021}), and applied SRG techniques to the study of feedback systems.
Theorem 2 of reference \cite{Chaffey2021c} shows that if SRGs corresponding to two
systems are separated (and remain separated as one SRG is scaled), the negative feedback of the two systems has finite incremental
gain.  The distance between the two SRGs is a nonlinear stability margin. This result generalizes the Nyquist criterion \cite{Nyquist1932} to stable nonlinear operators.  The
rolled-off passivity condition that we give in this paper guarantees that the
relevant SRGs are separated, and is a special case of \cite[Thm. 2]{Chaffey2021c}. In
order to keep this paper self-contained, we give a direct proof of the
result.

There is, of course, a large body of work on more general stability theorems for
feedback systems.
For systems of the Lur'e form, that is, an LTI dynamic component in negative
feedback with a static nonlinearity,  dynamic multipliers, such as those proposed by
Popov \cite{Popov1964} and Zames, Falb and O'Shea \cite{Zames1968, OShea1966,
OShea1967}, can be used to make the LTI
component passive, without affecting the passivity of the static nonlinearity.  The
passivity theorem can then be used to conclude stability.  Methods involving the gap and 
related metrics prove stability by showing the input/output graphs of two systems are
separated \cite{El-Sakkary1985a, Vinnicombe2000, Georgiou1990a, Foias1993, Georgiou1997}. 
Absolute stability and multiplier methods are 
unified in the IQC framework introduced by Megretski and Rantzer
\cite{Megretski1997}.  These methods guarantee boundedness of the input/output
operator, but in general give no guarantee of continuity.  Indeed, it has been shown
that large classes of multipliers, while preserving passivity of static
nonlinearities, destroy their incremental passivity, making it difficult to conclude
continuity from multiplier methods \cite{Kulkarni2001}.  Continuity at a particular
point in signal space, which isn't necessarily $u_0(t) = 0, y_0(t) = 0$, can be guaranteed by the use of differential 
techniques \cite{Manchester2018, Wang2019}.  The primary benefits of the result we
present here are the guarantee of continuity everywhere, the ability of incremental $(\mu, \gamma)$-dissipativity to capture 
common real world effects, and the ability to verify the required
properties graphically.

After introducing some necessary preliminaries and giving a brief review of the theory of SRGs in Section~\ref{sec:prelims}, 
we formally introduce the property of incremental $(\mu, \gamma)$-dissipativity in Section~\ref{sec:rolled-off}, and give it a graphical interpretation.  We then state the main result of this
paper in Section~\ref{sec:theorem}, Theorem~\ref{thm:small_passivity}, and give a direct proof of the result.  An
example is given in Section~\ref{sec:example}.

\section{Preliminaries}
\label{sec:prelims}

\subsection{Signals and systems}
We model systems as operators on a Hilbert space, which is a vector space of signals,
equipped with an inner product $\bra{\cdot}\ket{\cdot}$ and induced norm $\norm{\cdot} \coloneqq \sqrt{\bra{\cdot}\ket{\cdot}}$.
Let $\mathbb{F} \in \{\R, \C\}$, and let $L_2(\mathbb{F}^n)$ denote the Hilbert space of
signals $u: \R_{\geq 0} \to \mathbb{F}^n$ such that
\begin{IEEEeqnarray*}{rCl}
        \norm{u} \coloneqq \left(\int^\infty_0 \bar u(t) u(t) \dd{t}\right)^{\frac{1}{2}} < \infty,
\end{IEEEeqnarray*}
where $\bar{u}(t)$ denotes the conjugate transpose of $u(t)$.
The inner product on $L_2(\mathbb{F}^n)$ is given by
\begin{IEEEeqnarray*}{rCl}
        \bra{u}\ket{y} \coloneqq \int^\infty_0 \bar u(t)y(t) \dd{t}.
\end{IEEEeqnarray*}
We write $L_2$ for $L_2(\R^n)$, where the dimension $n$ is immaterial.

By \emph{an operator} (on a Hilbert space $\mathcal{H}$), we mean a possibly multi-valued map $H:
\mathcal{H} \to \mathcal{H}$.  The
\emph{graph}, or \emph{relation}, of $H$, is the set $\{(u, y) \,|\, u \in \mathcal{H}, y
\in H(u)\}$.  We will use the notions of an operator and its relation
interchangeably, and denote them the same way.  The usual operations on functions can
be extended to relations:
\begin{IEEEeqnarray*}{rCl}
        H^{-1} &=& \{ (y, u) \; | \; y \in H(u) \}\\
        H + G &=& \{ (x, y + z) \; | \; (x, y) \in H, (x, z) \in G \}\\
        HG &=& \{ (x, z) \; | \; \exists\; y \text{ s.t. } (x, y) \in G, (y, z) \in H \}.
\end{IEEEeqnarray*}
Note that $H^{-1}$ always exists, but is not an inverse in the usual sense.  In
particular, it is in general not the case that $H^{-1}H = I$.  If, however, $H$ is an
invertible function, its functional inverse coincides with its relational inverse, so
the notation $H^{-1}$ can be used without ambiguity.

An operator $H: \mathcal{H} \to \mathcal{H}$ is said to have \emph{finite incremental
gain}, or be \emph{Lipschitz continuous}, if
there exists some nonnegative $\mu < \infty$ such that, for all $u_1, u_2 \in
\mathcal{H}$,
$y_1 \in H(u_1)$ and $y_2 \in H(u_2)$,
\begin{IEEEeqnarray*}{rCl}
        \norm{y_1 - y_2} &\leq \mu \norm{u_1- u_2}.
\end{IEEEeqnarray*}

We say that an operator $H: \mathcal{H} \to \mathcal{H}$ is \emph{incrementally positive} if, for all $u_1, u_2 \in
\mathcal{H}$,
$y_1 \in H(u_1)$ and $y_2 \in H(u_2)$,
\begin{IEEEeqnarray*}{rCl}
        \bra{u_1 - u_2}\ket{y_1 - y_2} \geq 0.
\end{IEEEeqnarray*}
We say that $H$ is \emph{$\epsilon$-input strictly incrementally positive} if, for all $u_1, u_2 \in
\mathcal{H}$,
$y_1 \in H(u_1)$ and $y_2 \in H(u_2)$,
\begin{IEEEeqnarray*}{rCl}
        \bra{u_1 - u_2}\ket{y_1 - y_2} \geq \epsilon\norm{u_1 - u_2}^2.
\end{IEEEeqnarray*}
Incremental positivity is, in general, a weaker property than incremental passivity
(as defined by Zames \cite{Zames1966}), however, 
the two are equivalent for causal operators on $L_2$ \cite[p. 174]{Desoer1975}.
Incremental positivity is otherwise known as \emph{(operator) monotonicity}, as introduced by
Minty in 1961 \cite{Minty1961}, and popularized by Rockafellar
\cite{Rockafellar1976}.  Monotonicity has since become a fundamental property in the
field of mathematical optimization \cite{Bauschke2011}.  We use the term
\emph{incremental positivity}, adopted by Zames \cite{Zames1966}, Desoer and
Vidyasagar \cite{Desoer1975}, and others, partly because the results of this paper
form a natural extension of their work, and partly to avoid confusion with the
unrelated notion of monotonicity introduced by Hirsch and Smith \cite{Hirsch2006}.

\subsection{Scaled relative graphs}\label{sec:srg}

In this section, we briefly introduce the theory of SRGs.  We give only the theory
required for the proof of Theorem~\ref{thm:small_passivity}, and refer the interested
reader to reference \cite{Ryu2021} for the complete theory of SRG interconnections and their
use in the theory of optimization, reference
\cite{Chaffey2021c} for the use of SRGs in systems theory and their relation to the
Nyquist diagram, and references \cite{Ryu2021, Huang2020a, Chaffey2021c, Pates2021} for the computation of SRGs for particular systems.

\subsubsection{Definition}

We define the SRG on a general, possibly complex, Hilbert space $\mathcal{H}$.

The angle between $u, y \in \mathcal{H}$ is defined as
\begin{IEEEeqnarray*}{rCl}
        \angle(u, y) \coloneqq \acos \frac{\Re \bra{u}\ket{y}}{\norm{u}\norm{y}}. 
\end{IEEEeqnarray*}
This angle is in $[0, \pi]$.

Let $H: \mathcal{H} \to \mathcal{H}$ be an operator.  Given $u_1, u_2 \in
\mathcal{H}$, $u_1 \neq u_2$, define the set of complex numbers $z_H(u_1, u_2)$ by
\begin{IEEEeqnarray*}{rCl}
        z_H(u_1, u_2) \coloneqq &&\left\{\frac{\norm{y_1 - y_2}}{\norm{u_1 - u_2}} e^{\pm j\angle(u_1 -
u_2, y_1 - y_2)}\middle|\; y_1 \in H(u_1), y_2 \in H(u_2) \right\}.
\end{IEEEeqnarray*}
If $u_1 = u_2$ and there exist corresponding
outputs $y_1 \in H(u_1), y_2 \in H(u_2), y_1 \neq y_2$, then
$z_H(u_1, u_2)$ is defined to be $\{\infty\}$.  If $H$ is single valued at $u_1$,
$z_H(u_1, u_1)$ is the empty set.

The \emph{Scaled Relative Graph} (SRG) of $H$ over $\mathcal{H}$ is then given by
\begin{IEEEeqnarray*}{rCl}
        \srg{H} \coloneqq \bigcup_{u_1, u_2 \in\, \mathcal{H}}  z_H(u_1, u_2).
\end{IEEEeqnarray*}

The SRG of an operator is a region in the extended complex plane, 
symmetric about the real axis, from which properties of the
operator can be easily read.  Each point on the SRG gives the relative gain and phase shift of the operator
for one or more particular pairs of inputs.   The SRG can be thought of as a nonlinear
generalization of the Nyquist diagram.  This is elaborated on in the following
example.
\begin{example} \label{ex:LTI}
        The upper half of the SRG of a stable LTI transfer function $G$ is the \emph{hyperbolic-convex hull}
        of the upper half of its Nyquist diagram \cite[Thm. 4]{Chaffey2021c}.  Mathematically, this is
        the set
        \begin{IEEEeqnarray*}{rCl}
        (f\circ g)^{-1}(\operatorname{Co}((f\circ g)(\operatorname{Nyquist}^+(G)))),
        \end{IEEEeqnarray*}
        where $\operatorname{Co}$ is the convex hull, $\operatorname{Nyquist}^+(G)
        \subseteq \C$ is the Nyquist diagram of $G$ restricted to $\{z \in \C :
        \Im(z) \geq 0\}$, and $f, g: \C \to \C$ are the
        mappings
        \begin{IEEEeqnarray*}{rCl}
        f(z) &=& \frac{2z}{1 + |z|^2},\\
        g(z) &=& \frac{z - j}{z + j}.
        \end{IEEEeqnarray*}
        Intuitively, the hyperbolic-convex hull is obtained by taking the convex hull
        with arcs centred on the real axis, rather than straight lines.  For example,
        the SRG of $1/(s + 1)$ is its Nyquist diagram, the circle with centre at
        $0.5$ and radius $0.5$.  The SRG of $e^{-s}/(s+1)$ is illustrated in
        Figure~\ref{fig:delay_srg}. 
\end{example}

\begin{figure}[hb]
        \centering
        \includegraphics{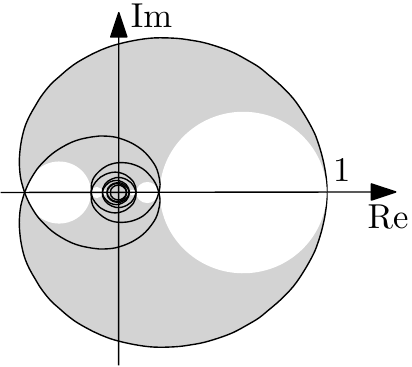}
        \caption{Nyquist diagram (black) and SRG (grey) of the transfer function
        $e^{-s}/(s+1)$.}%
        \label{fig:delay_srg}
\end{figure}

We also give a simple example of the SRG of a nonlinear operator.

\begin{example} \label{ex:sat}
        Define the unit saturation $\operatorname{sat}(\cdot)$ by
\begin{IEEEeqnarray*}{rCl}
        y &=& \begin{cases}
                u & |u| < 1\\
                u/|u|& \text{otherwise}.
              \end{cases}
\end{IEEEeqnarray*}
The SRG of the unit saturation is the closed disc with centre at $0.5$ and radius
$0.5$: $\{z\; \large{|}\; |z - 0.5| \leq 0.5\}$.  This is proved in \cite[Prop.
12]{Chaffey2021c}.
\end{example}

\subsubsection{Determining system properties from the SRG}

The properties which can be read from the SRG are those that define \emph{SRG-full classes}.
If $\mathcal{A}$ is a class of operators, we define the SRG of $\mathcal{A}$ by
\begin{IEEEeqnarray*}{rCl}
        \srg{\mathcal{A}} \coloneqq \bigcup_{H \in \mathcal{A}} \srg{H}.
\end{IEEEeqnarray*}
Note that a class of operators can be a single operator, and the operators in a class
need not act on the same Hilbert space.

A class $\mathcal{A}$, or its SRG, is called \emph{SRG-full} if
\begin{IEEEeqnarray*}{rCl}
        H \in \mathcal{A}\quad \iff \quad \srg{H} \subseteq \srg{\mathcal{A}}.
\end{IEEEeqnarray*}
The value of SRG-fullness is in the implication $\srg{H} \subseteq \srg{\mathcal{A}}
\implies H \in \mathcal{A}$.  This allows class membership to be determined
graphically.

Two examples of SRG-full classes are the class of operators which share a finite incremental
gain bound, and the class of incrementally positive operators.

\begin{proposition}\label{prop:finite_gain}
        Let $\mathcal{H}$ denote an arbitrary Hilbert space. 
        An operator $H: \mathcal{H} \to \mathcal{H}$ obeys
         \begin{IEEEeqnarray*}{rCl}
                 \norm{y_1 - y_2} \leq \mu \norm{u_1 - u_2}
         \end{IEEEeqnarray*}
         for every $u_1, u_2 \in \mathcal{H}$, $y_1 \in H(u_1)$ and $y_2 \in H(u_2)$ if, and
         only if, its SRG belongs to the closed disc of radius $\mu$: 
         \begin{IEEEeqnarray*}{rCl}
         \srg{H} \subseteq \{z \, |\, z \in \C, |z| \leq \mu \}.
         \end{IEEEeqnarray*}

        $H$ obeys
         \begin{IEEEeqnarray*}{rCl}
                 \bra{u_1 - u_2}\ket{y_1 - y_2} \geq 0
         \end{IEEEeqnarray*}
         for every $u_1, u_2 \in \mathcal{H}$, $y_1 \in H(u_1)$ and $y_2 \in H(u_2)$ if, and
         only if, its SRG belongs to the closed right half complex plane: 
         \begin{IEEEeqnarray*}{rCl}
         \srg{H} \subseteq \{z \, |\, z \in \C, \Re(z) \geq 0 \}.
         \end{IEEEeqnarray*}
\end{proposition}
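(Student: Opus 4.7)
The plan is to unwind the definition of the SRG and observe that, pointwise, each $z \in z_H(u_1, u_2)$ encodes exactly the quantities appearing on the two sides of the two inequalities. Both biconditionals then reduce to showing that a bound on every element of $\srg{H}$ is the same as the corresponding incremental bound holding for every pair of input/output pairs.

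For the first claim, I first note that any finite element of $z_H(u_1, u_2)$ has modulus
$$\left| \frac{\norm{y_1-y_2}}{\norm{u_1-u_2}} e^{\pm j\angle(u_1-u_2,\, y_1-y_2)} \right| = \frac{\norm{y_1-y_2}}{\norm{u_1-u_2}}.$$
The forward direction is then immediate: if the gain bound holds, every finite point of $\srg{H}$ has modulus $\leq \mu$, and the case $u_1 = u_2$ forces $y_1 = y_2$, so $\infty \notin \srg{H}$. For the reverse direction, if the SRG lies in the disc of radius $\mu$, then for $u_1 \neq u_2$ the above display yields $\norm{y_1-y_2} \leq \mu \norm{u_1 - u_2}$; and since $\infty \notin \srg{H}$, we have $y_1 = y_2$ whenever $u_1 = u_2$, so the inequality holds trivially there as well.

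For the second claim, I compute the real part of a generic (finite) SRG point using the definition of $\angle$:
$$\Re\!\left( \frac{\norm{y_1-y_2}}{\norm{u_1-u_2}} e^{\pm j\angle(u_1-u_2,\, y_1-y_2)} \right) = \frac{\norm{y_1-y_2}}{\norm{u_1-u_2}} \cdot \frac{\Re \bra{u_1-u_2}\ket{y_1-y_2}}{\norm{u_1-u_2}\norm{y_1-y_2}} = \frac{\Re \bra{u_1-u_2}\ket{y_1-y_2}}{\norm{u_1-u_2}^2}.$$
Thus $\Re(z) \geq 0$ for every $z \in z_H(u_1,u_2)$ if and only if $\Re \bra{u_1-u_2}\ket{y_1-y_2} \geq 0$ (which, on a real Hilbert space, is just $\bra{u_1-u_2}\ket{y_1-y_2} \geq 0$). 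Taking the union over $u_1, u_2$ gives the equivalence. The case $u_1=u_2$, $y_1 \neq y_2$ adds $\infty$ to $\srg{H}$, which is still consistent with the closed right half plane, and trivially consistent with the inner product inequality since $u_1-u_2 = 0$.

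The only potentially subtle step is the handling of the $\infty$ point and the empty case $u_1 = u_2$ with $H$ single valued; these are dealt with by inspection of the definition. All of the substantive work is the modulus/real-part computation above, which is essentially bookkeeping once the definition of $\angle$ is substituted.
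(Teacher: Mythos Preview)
Your proof is correct and follows essentially the same route as the paper, which defers to the proof of Lemma~\ref{lem:srg}: both arguments hinge on the identities $|z| = \norm{y_1-y_2}/\norm{u_1-u_2}$ and $\Re(z) = \Re\bra{u_1-u_2}\ket{y_1-y_2}/\norm{u_1-u_2}^2$ for a generic SRG point, after which the two biconditionals are immediate. Your explicit treatment of the $\infty$ and $u_1=u_2$ edge cases is slightly more careful than the paper's, but the substance is the same.
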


\begin{proof}
        The proof  may be found in
        the proof of \cite[Prop. 3.3]{Ryu2021}, or the proof of Lemma~\ref{lem:srg}
        in Section~\ref{sec:rolled-off}.
\end{proof}

\subsubsection{System interconnection}

The SRGs of interconnected systems can be determined or approximated from the SRGs of
the components.  For a full treatment of SRG interconnections, we refer the reader to
\cite{Ryu2021}, Theorems 4.1-4.5.  Here, we describe what happens to the SRG under
input and output scaling, summation, inversion and composition.

If $C, D \subseteq \C$, we define the
operation $C + D$ to be the Minkowski sum of $C$ and
$D$, that is,
\begin{IEEEeqnarray*}{rCl}
        C + D \coloneqq \{c + d\, | \, c \in C, d \in D\}.
\end{IEEEeqnarray*}
The set $CD$ is defined to be the Minkowski product of $C$ and $D$,
\begin{IEEEeqnarray*}{rCl}
        CD \coloneqq \{cd\, | \, c \in C, d \in D\}.
\end{IEEEeqnarray*}
The set $\alpha C$ is defined by
\begin{IEEEeqnarray*}{rCl}
        \alpha C \coloneqq \{ \alpha c\, |\, c \in C\}.
\end{IEEEeqnarray*}

We define inversion in the complex plane by $re^{j\omega} \mapsto (1/r)e^{j\omega}$.
Throughout this paper, any inversion of a complex number refers to this operation,
which maps points outside the unit circle to the
inside, and vice versa. The points $0$ and $\infty$ are exchanged under inversion.
This operation only differs from the usual complex inversion in that the complex
conjugate is not taken; this is left
out for convenience, as it allows us to work in the upper half complex plane. This is
possible as the SRG is symmetric about the real axis.

Given an operator $A$, the operator 
 $\alpha A$ is defined by $u \mapsto \alpha A(u)$,
        and the operator $A\alpha$ is defined by $u \mapsto A(\alpha u)$.  These
        definitions extend to classes of operators in the natural way.  Given two
        classes of operators $\mathcal{A}$ and $\mathcal{B}$, their sum $\mathcal{A}
        + \mathcal{B}$ is defined to be $\{A + B\; | \; A \in \mathcal{A}, B \in
        \mathcal{B}, \dom(A) = \dom(B)\}$.  The composition $\mathcal{A}\mathcal{B}$
        is defined to be $\{AB \; | \; A \in \mathcal{A}, B \in \mathcal{B},
        \operatorname{range}(B) = \dom(A)\}$.

Define the line segment between $z_1, z_2 \in \C$ as $[z_1, z_2] \coloneqq \{\alpha
                                z_1 + (1 - \alpha) z_2\, |\, \alpha \in [0, 1]\}$.
A class of operators $\mathcal{A}$ is said to
satisfy the \emph{chord property} if $z \in \srg{\mathcal{A}}\setminus\{\infty\}$ implies $[z,
\bar z] \subseteq \srg{\mathcal{A}}$.

Define the \emph{right-hand arc}, $\rarc{z, \bar{z}}$, between $z$ and $\bar{z}$ to be the arc between $z$
and $\bar{z}$ with centre on the origin and real part greater than or equal to
$\Re(z)$. The \emph{left-hand arc}, $\larc{z, \bar{z}}$, is defined the same way, but with real part less
than or equal to $\Re(z)$ (see \cite{Ryu2021} for a more formal definition).
A class of operators $\mathcal{A}$ is said to satisfy the \emph{right hand (resp. left hand) arc property} if,
for all $z \in \srg{\mathcal{A}}$, $\rarc{z, \bar{z}} \in \srg{\mathcal{A}}$  (resp.
$\larc{z,
\bar{z}} \in \srg{\mathcal{A}}$).

We have the following interconnection rules for SRGs, which correspond to \cite[Thm.
4.2-4.5]{Ryu2021}.

\begin{proposition}\label{prop:gain}
        Let $\alpha \in \R, \alpha \neq 0$. If $\mathcal{A}$ is a class of operators,
        \begin{IEEEeqnarray*}{rCl}
                \srg{\alpha\mathcal{A}} &=& \srg{\mathcal{A}\alpha} =
                \alpha\srg{\mathcal{A}}.
        \end{IEEEeqnarray*}
        Furthermore, if $\mathcal{A}$ is SRG-full, then $\alpha\mathcal{A}$ and $\mathcal{A}\alpha$
         are SRG-full.
\end{proposition}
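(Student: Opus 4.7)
The plan is to prove the set equality at the level of a single operator and a single pair of inputs first, then take unions over inputs and over the class, and finally deduce SRG-fullness from a bijection argument.

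First I would fix $A \in \mathcal{A}$, $u_1, u_2 \in \mathcal{H}$, and compute $z_{\alpha A}(u_1, u_2)$ directly from the definition. With $y_i \in A(u_i)$, the outputs of $\alpha A$ at $u_i$ are $\alpha y_i$, so the radial factor becomes $|\alpha|\norm{y_1 - y_2}/\norm{u_1 - u_2}$. The angle computation is where the main subtlety lies: since $\Re\bra{u_1-u_2}\ket{\alpha(y_1-y_2)} = \alpha \Re\bra{u_1-u_2}\ket{y_1-y_2}$, one gets $\angle(u_1-u_2, \alpha(y_1-y_2)) = \angle(u_1-u_2, y_1-y_2)$ for $\alpha > 0$ and $\pi - \angle(u_1-u_2, y_1-y_2)$ for $\alpha < 0$. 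The positive case yields $z_{\alpha A}(u_1, u_2) = \alpha\, z_A(u_1, u_2)$ at once. The negative case introduces a factor $e^{\pm j\pi} = -1$, and the same conclusion is recovered only after using that $z_A(u_1, u_2)$ is invariant under complex conjugation, thanks to the $\pm$ in its definition; this matching of a $\pi$-rotation against conjugate symmetry is the main technical point of the proof. A parallel computation for $A\alpha$, after the substitution $v_i \coloneqq \alpha u_i$, gives $z_{A\alpha}(u_1, u_2) = \alpha\, z_A(\alpha u_1, \alpha u_2)$.

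Next I would pass from single operators to the class by taking unions. Since $\alpha \neq 0$, the map $(u_1, u_2) \mapsto (\alpha u_1, \alpha u_2)$ is a bijection on $\mathcal{H}^2$, so unioning over all $u_1, u_2$ gives $\srg{A\alpha} = \alpha\srg{A} = \srg{\alpha A}$; a further union over $A \in \mathcal{A}$, commuting the scaling with the union, yields the stated equality $\srg{\alpha \mathcal{A}} = \srg{\mathcal{A}\alpha} = \alpha\srg{\mathcal{A}}$.

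For the SRG-fullness claim I would use the bijection $H \leftrightarrow \alpha^{-1}H$ between operators. Suppose $\mathcal{A}$ is SRG-full and $\srg{H} \subseteq \srg{\alpha\mathcal{A}} = \alpha\srg{\mathcal{A}}$. Setting $A \coloneqq \alpha^{-1}H$ and applying the first part of the proposition gives $\srg{A} = \alpha^{-1}\srg{H} \subseteq \srg{\mathcal{A}}$, whence $A \in \mathcal{A}$ and $H = \alpha A \in \alpha\mathcal{A}$. The reverse implication is immediate from the first part. The SRG-fullness of $\mathcal{A}\alpha$ follows in the same way using $H \leftrightarrow H\alpha^{-1}$.
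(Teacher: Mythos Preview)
The paper itself does not prove this proposition; it simply quotes it as one of the interconnection rules from \cite[Thm.~4.2]{Ryu2021}. Your direct argument is correct: the computation of $z_{\alpha A}$ and $z_{A\alpha}$ at the level of a single input pair is sound, the key observation that the built-in conjugate symmetry of $z_H$ absorbs the sign of $\alpha$ when $\alpha<0$ is handled properly, and the bijection $H\leftrightarrow\alpha^{-1}H$ (respectively $H\leftrightarrow H\alpha^{-1}$) cleanly yields SRG-fullness. The only minor omission is the degenerate case $u_1=u_2$, where $z_H(u_1,u_2)$ is either $\{\infty\}$ or empty; both possibilities are trivially preserved under multiplication by a nonzero real scalar.
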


\begin{proposition}\label{prop:inversion}
        If $\mathcal{A}$ is a class of operators, then
        \begin{IEEEeqnarray*}{rCl}
                \srg{\mathcal{A}^{-1}} = (\srg{\mathcal{A}})^{-1}.
        \end{IEEEeqnarray*}
        Furthermore, if $\mathcal{A}$ is SRG-full, then $\mathcal{A}^{-1}$ is
        SRG-full.
\end{proposition}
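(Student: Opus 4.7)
The plan is to prove the set-equality $\srg{\mathcal{A}^{-1}}=(\srg{\mathcal{A}})^{-1}$ pointwise at the level of a single operator $H$, and then get the class statement by taking unions. The SRG-fullness claim should then fall out by applying the pointwise identity twice, using that $(H^{-1})^{-1}=H$ as relations.

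First I would unpack the definitions. For a pair of distinct inputs $u_1,u_2$ with outputs $y_i\in H(u_i)$ and $y_1\ne y_2$, the corresponding contribution to $\srg{H}$ is a pair of conjugate points of modulus $\|y_1-y_2\|/\|u_1-u_2\|$ and argument $\pm\angle(u_1-u_2,y_1-y_2)$. The same pair, viewed in the graph of $H^{-1}$, is $(y_i,u_i)$, so it contributes to $\srg{H^{-1}}$ a pair of points of modulus $\|u_1-u_2\|/\|y_1-y_2\|$ and argument $\pm\angle(y_1-y_2,u_1-u_2)$. The key observation is that the angle function is symmetric in its arguments, since $\Re\bra{a}\ket{b}=\Re\bra{b}\ket{a}$ on any (possibly complex) Hilbert space; hence the arguments agree and the moduli are reciprocal. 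Under the paper's convention $re^{j\omega}\mapsto(1/r)e^{j\omega}$, this is exactly the inversion of the $\srg{H}$ contribution.

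Next I would handle the degenerate cases. If $u_1=u_2$ but there are distinct outputs $y_1\ne y_2$, then by definition $\infty\in\srg{H}$; the same pair in $H^{-1}$ has $y_1\ne y_2$ with common output $u_1$, giving modulus $0$ and thus $0\in\srg{H^{-1}}$. The reverse situation swaps $0$ and $\infty$, and $0,\infty$ are exchanged by the inversion convention. The case where $H$ is single-valued at a point (giving the empty contribution) is symmetric to $H^{-1}$ being empty at that image. Taking the union over all input pairs gives $\srg{H^{-1}}=(\srg{H})^{-1}$, and then unioning over $H\in\mathcal{A}$ gives the first claim.

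For SRG-fullness, suppose $\mathcal{A}$ is SRG-full and $\srg{G}\subseteq\srg{\mathcal{A}^{-1}}=(\srg{\mathcal{A}})^{-1}$. Applying the pointwise identity to $G$, the inclusion inverts to
\begin{equation*}
\srg{G^{-1}}=(\srg{G})^{-1}\subseteq\bigl((\srg{\mathcal{A}})^{-1}\bigr)^{-1}=\srg{\mathcal{A}},
\end{equation*}
using that the inversion map is an involution on the extended complex plane. SRG-fullness of $\mathcal{A}$ then yields $G^{-1}\in\mathcal{A}$, whence $G=(G^{-1})^{-1}\in\mathcal{A}^{-1}$, as required. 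I expect the main (minor) obstacle to be the bookkeeping around the $0/\infty$ boundary and the empty contribution from single-valued points; the rest is a direct rewriting of definitions, driven by the symmetry $\angle(a,b)=\angle(b,a)$.
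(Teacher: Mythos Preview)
The paper does not actually prove this proposition; it is simply stated as one of the interconnection rules and attributed to \cite[Thm.~4.2--4.5]{Ryu2021}. Your direct argument---reducing to a single operator, using the symmetry $\angle(a,b)=\angle(b,a)$ to see that the SRG contribution of a tuple $(u_1,u_2,y_1,y_2)$ in $H$ and the contribution of $(y_1,y_2,u_1,u_2)$ in $H^{-1}$ have the same argument and reciprocal moduli, handling the $0\leftrightarrow\infty$ exchange separately, and then deducing SRG-fullness by inverting the inclusion---is correct and is essentially the standard proof one would find in the cited reference.
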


\begin{proposition}\label{prop:summation}
        Let $\mathcal{A}$ and $\mathcal{B}$ be classes of operators, such that 
        $\infty \nin \srg{\mathcal{A}}$ and $\infty \nin \srg{\mathcal{B}}$. Then:
        \begin{enumerate}
                \item if $\mathcal{A}$ and $\mathcal{B}$ are SRG-full, then
                        $\srg{\mathcal{A} + \mathcal{B}} \supseteq \srg{\mathcal{A}} +
                        \srg{\mathcal{B}}$.
                \item if either $\mathcal{A}$ or $\mathcal{B}$ satisfies the chord
                        property, then 
                        $\srg{\mathcal{A} + \mathcal{B}} \subseteq \srg{\mathcal{A}} +
                        \srg{\mathcal{B}}$.
        \end{enumerate}
\end{proposition}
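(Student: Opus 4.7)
The plan is to prove the two inclusions by complementary strategies: for part 1, exhibit each prescribed point $a + b$ of $\srg{\mathcal{A}} + \srg{\mathcal{B}}$ as a point of the SRG of a concretely constructed operator in $\mathcal{A} + \mathcal{B}$ via SRG-fullness; for part 2, take an arbitrary point of $\srg{\mathcal{A} + \mathcal{B}}$, decompose the corresponding output differences orthogonally to the input difference, and use the chord property of $\mathcal{A}$ to absorb the mismatch in the imaginary parts.

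For part 1, given $a = \alpha_a + j\beta_a \in \srg{\mathcal{A}}$ and $b = \alpha_b + j\beta_b \in \srg{\mathcal{B}}$, I would work on the common Hilbert space $\mathcal{H} = \R^2$ and consider
\begin{IEEEeqnarray*}{rCl}
        A_0 &=& \begin{pmatrix} \alpha_a & -\beta_a \\ \beta_a & \alpha_a \end{pmatrix},\qquad B_0 \;=\; \begin{pmatrix} \alpha_b & -\beta_b \\ \beta_b & \alpha_b \end{pmatrix}.
\end{IEEEeqnarray*}
A direct calculation yields $\norm{A_0 v} = \abs{a}\,\norm{v}$ and $\bra{v}\ket{A_0 v} = \alpha_a \norm{v}^2$ for every nonzero $v \in \R^2$, so $\srg{A_0} = \{a, \bar a\} \subseteq \srg{\mathcal{A}}$ (the latter using symmetry of SRGs about the real axis). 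SRG-fullness then gives $A_0 \in \mathcal{A}$, and analogously $B_0 \in \mathcal{B}$. Since the matrix $A_0 + B_0$ represents the complex scalar $a + b$, one has $a + b \in \srg{A_0 + B_0} \subseteq \srg{\mathcal{A} + \mathcal{B}}$.

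For part 2, assume without loss of generality that $\mathcal{A}$ has the chord property, and take $z \in \srg{\mathcal{A} + \mathcal{B}}$, realized by some $H = A + B$ at inputs $u_1 \neq u_2$. With respect to the real inner product $\Re\bra{\cdot}\ket{\cdot}$, decompose
\begin{IEEEeqnarray*}{rCl}
        Au_1 - Au_2 &=& \alpha_A(u_1 - u_2) + w^A,\\
        Bu_1 - Bu_2 &=& \alpha_B(u_1 - u_2) + w^B,
\end{IEEEeqnarray*}
with $w^A, w^B$ real-orthogonal to $u_1 - u_2$. Setting $a \coloneqq \alpha_A + j\norm{w^A}/\norm{u_1-u_2}$ and $b \coloneqq \alpha_B + j\norm{w^B}/\norm{u_1-u_2}$ gives $a \in z_A(u_1,u_2) \subseteq \srg{\mathcal{A}}$ and $b \in z_B(u_1,u_2) \subseteq \srg{\mathcal{B}}$, with $\Re(z) = \Re(a) + \Re(b)$ and $\abs{\Im(z)} = \norm{w^A + w^B}/\norm{u_1 - u_2}$. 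Applying the forward and reverse triangle inequalities in $\mathcal{H}$ to $w^A + w^B$ produces
\begin{IEEEeqnarray*}{rCl}
        \bigl|\, \abs{\Im(a)} - \abs{\Im(b)} \,\bigr| \;\leq\; \abs{\Im(z)} \;\leq\; \abs{\Im(a)} + \abs{\Im(b)}.
\end{IEEEeqnarray*}
Replacing $b$ by $\bar b$ if necessary so that $\Im(b)$ and $\Im(z)$ share sign, and setting $a' \coloneqq z - b$, one finds $\Re(a') = \Re(a)$ and $\abs{\Im(a')} = \bigl|\, \abs{\Im(z)} - \abs{\Im(b)} \,\bigr| \leq \abs{\Im(a)}$. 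The chord property applied to the pair $a, \bar a \in \srg{\mathcal{A}}$ then places $a' \in \srg{\mathcal{A}}$, and the decomposition $z = a' + b$ with $a' \in \srg{\mathcal{A}}$, $b \in \srg{\mathcal{B}}$ finishes the proof.

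The main obstacle will be the sign-choice step in part 2: the bound $\abs{\Im(a')} \leq \abs{\Im(a)}$ draws on the forward triangle inequality when $\abs{\Im(z)} \geq \abs{\Im(b)}$ and on the reverse triangle inequality when $\abs{\Im(z)} < \abs{\Im(b)}$, and it is crucial that the chord property supplies the full vertical chord $[a, \bar a] \subseteq \srg{\mathcal{A}}$, not merely its endpoints, so that the intermediate imaginary part produced by the decomposition remains a point of $\srg{\mathcal{A}}$. Part 1 is more routine; the only subtleties are that SRG-fullness is invoked on operators acting on $\R^2$ (legitimate because SRG-fullness constrains the class only through its SRG) and that the hypothesis $\infty \nin \srg{\mathcal{A}} \cup \srg{\mathcal{B}}$ is what makes $a + b$ meaningful as a finite complex number.
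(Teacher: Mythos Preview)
The paper does not supply its own proof of this proposition; it simply records the statement and refers to \cite[Thm.~4.3]{Ryu2021}. Your argument is correct and is essentially the proof given there: part~1 via explicit $2\times 2$ rotation--scaling matrices on $\R^2$ together with SRG-fullness, and part~2 via the orthogonal decomposition of the output increments relative to $u_1 - u_2$, with the chord property absorbing the discrepancy between $\lvert\Im(z)\rvert$ and $\lvert\Im(a)\rvert + \lvert\Im(b)\rvert$. Your handling of the sign-choice step and the two triangle-inequality cases is accurate, and your remark that the hypothesis $\infty \notin \srg{\mathcal{A}} \cup \srg{\mathcal{B}}$ forces every $A \in \mathcal{A}$ and $B \in \mathcal{B}$ to be single-valued (so that $u_1 \neq u_2$ may be assumed in part~2) is the right justification for restricting to finite points.
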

Infinity can be allowed by setting $\srg{\mathcal{A} + \mathcal{B}} = \{\infty\}$ if
$\srg{\mathcal{A}} = \varnothing$ and $\infty \in \srg{\mathcal{B}}$.

\begin{proposition}\label{prop:composition}
        Let $\mathcal{A}$ and $\mathcal{B}$ be classes of operators, such that
        $\srg{\mathcal{A}}$ and $\srg{\mathcal{B}}$ are nonempty and bounded. Then:
        \begin{enumerate}
                \item if $\mathcal{A}$ and $\mathcal{B}$ are SRG-full, then
                        $\srg{\mathcal{A}\mathcal{B}} \supseteq \srg{\mathcal{A}}
                        \srg{\mathcal{B}}$.
                \item if either $\mathcal{A}$ or $\mathcal{B}$ satisfies an arc 
                        property, then 
                        $\srg{\mathcal{A}\mathcal{B}} \subseteq \srg{\mathcal{A}}
                        \srg{\mathcal{B}}$.
        \end{enumerate}
\end{proposition}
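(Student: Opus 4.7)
My plan is to prove the two inclusions separately. For the outer inclusion of part~(1), fix any $a \in \srg{\mathcal{A}}$ and $b \in \srg{\mathcal{B}}$; I want to exhibit operators $A \in \mathcal{A}$, $B \in \mathcal{B}$ and an input pair for which the composition $AB$ realises the point $ab$ in its SRG. By SRG-fullness, it is enough to construct operators whose SRGs sit inside $\srg{\mathcal{A}}$ and $\srg{\mathcal{B}}$ respectively. The standard device is to work on a two-dimensional real subspace of a suitable Hilbert space and take $A$ and $B$ each to be a scalar gain composed with a planar rotation, with gains $r_A, r_B$ and rotation angles $\theta_A, \theta_B$ chosen so that under composition the input--output ratio on the chosen input pair is exactly $ab$. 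The SRGs of such scaled rotations consist of just a conjugate pair of points, which one arranges to lie in $\srg{\mathcal{A}}$ and $\srg{\mathcal{B}}$.

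The reverse inclusion in part~(2) is the substantive direction. Fix $A \in \mathcal{A}$, $B \in \mathcal{B}$ and take any $z \in \srg{AB}$, coming from inputs $u_1, u_2$ and trajectories $z_i \in A(y_i)$, $y_i \in B(u_i)$. Write $r_B = \norm{y_1-y_2}/\norm{u_1-u_2}$, $r_A = \norm{z_1-z_2}/\norm{y_1-y_2}$, $\theta_B = \angle(u_1-u_2, y_1-y_2)$, $\theta_A = \angle(y_1-y_2, z_1-z_2)$ and $\theta = \angle(u_1-u_2, z_1-z_2)$. Then $a \coloneqq r_A e^{\pm j\theta_A} \in \srg{A} \subseteq \srg{\mathcal{A}}$ and $b \coloneqq r_B e^{\pm j\theta_B} \in \srg{B} \subseteq \srg{\mathcal{B}}$, and the magnitudes satisfy $|z| = r_A r_B = |ab|$ automatically. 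However, $\arg(ab) = \pm(\theta_A + \theta_B)$ while $\arg z = \pm\theta$, and in a general Hilbert space these need not coincide: only the triangle-type bounds $|\theta_A - \theta_B| \leq \theta \leq \min(\theta_A + \theta_B, \pi)$ hold.

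This is where the arc property does the real work. Suppose without loss of generality that $\mathcal{A}$ satisfies the right-hand arc property, so $\rarc{a, \bar a} \subseteq \srg{\mathcal{A}}$. This arc is precisely the set of complex numbers of modulus $r_A$ whose argument ranges over $[-\theta_A, \theta_A]$; multiplying these by $b$ (or $\bar b$) sweeps out every complex number of modulus $r_A r_B$ with argument in $[\theta_B - \theta_A,\, \theta_B + \theta_A]$ and its conjugate interval. The triangle inequality above places $\pm \theta$ inside this interval, so there is some $a' \in \rarc{a, \bar a} \subseteq \srg{\mathcal{A}}$ with $a' b = z$, giving $z \in \srg{\mathcal{A}}\srg{\mathcal{B}}$.

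The main obstacle is the angle-matching step in part~(2): one has to prove both Hilbert-space angle triangle inequalities and then match them against the full range of arguments swept by the arc, with care taken in the degenerate cases where $\theta_A$ or $\theta_B$ equals $0$ or $\pi$ so that the arc collapses, and in the left-hand arc case where the swept interval is offset by $\pi$ and one appeals instead to $-z$ lying in the product set. The boundedness hypothesis on $\srg{\mathcal{A}}$ and $\srg{\mathcal{B}}$ keeps $\infty$ out of the bookkeeping. For part~(1), the delicate point is purely constructive: exhibiting operators whose SRGs consist of just the desired conjugate pair of points, extended by the zero or identity operator on the orthogonal complement to live on any ambient Hilbert space, and verifying that the composition graph really does select $ab$ on the chosen input pair.
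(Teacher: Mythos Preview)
The paper does not give its own proof of this proposition: it is quoted as one of the interconnection rules ``which correspond to \cite[Thm.~4.2--4.5]{Ryu2021}'', so there is no in-paper argument to compare against.  Your sketch is essentially the proof from the cited source.  Part~(1) via complex-scalar multiplication operators $w \mapsto zw$ on $L_2(\mathbb{C})$ (whose SRG is $\{z,\bar z\}$, so SRG-fullness forces membership in the class) is exactly the construction used there, and part~(2) via the Hilbert-space angle triangle inequality combined with the arc property is the right mechanism.

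One small gap worth flagging: your stated bounds $|\theta_A - \theta_B| \leq \theta \leq \min(\theta_A + \theta_B,\pi)$ omit the additional spherical constraint $\theta_A + \theta_B + \theta \leq 2\pi$, which follows by applying the ordinary triangle inequality after replacing the intermediate increment $y_1 - y_2$ by its negative (sending $\theta_A \mapsto \pi - \theta_A$, $\theta_B \mapsto \pi - \theta_B$).  This extra inequality is precisely what makes the left-hand arc case go through: the swept argument interval there is $[\theta_A + \theta_B,\, 2\pi - \theta_A - \theta_B]$ (together with its reflection), and it is the bound $\theta \leq 2\pi - \theta_A - \theta_B$ that places $\pm\theta$ inside it.  Your remark about ``appealing to $-z$'' in that case is not quite the right fix; the correct device is this second angle inequality, not a sign change on $z$.
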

Under additional assumptions, unbounded and empty SRGs can be allowed -- see the discussion following \cite[Thm.
4.5]{Ryu2021}.

We conclude this preliminary section with a simple example of SRG composition.

\begin{example}\label{ex:composition}
        Let $\bar G$ be the transfer function $1/(s+1)$.  Denote the corresponding
        operator on $L_2$ by $u \mapsto Gu$. Consider the composition
        $G(\operatorname{sat}(\cdot))$ - the cascade of $G$ with a unit saturation. 
        The SRGs of both $G$ and
        $\operatorname{sat}(\cdot)$ are contained within the disc $\mathcal{U}
        \coloneqq \{z\; |\; |z - 0.5| \leq 0.5\}$, as shown in Examples~\ref{ex:LTI}
        and~\ref{ex:sat}.  Note that the SRG of $\operatorname{sat}(\cdot)$ satisfies
        the right-hand arc property.

        Let $\mathcal{A}$ denote the set of all operators whose SRGs lie within the
        disc $\mathcal{U}$.  Then $G (\operatorname{sat}(\cdot)) \in
        \mathcal{A}\mathcal{A}$.  It follows from statement~2 of Proposition~\ref{prop:composition}
        that $\srg{G (\operatorname{sat}(\cdot))} \subseteq \mathcal{U}\mathcal{U}$.  As
        shown in \cite[Thm. 1]{Huang2020a}, $\mathcal{U}\mathcal{U}$ is the
        cardioid $\{re^{j\phi} \; | \; r \leq \frac{1}{2}(1 + \cos(\phi))\}$,
        illustrated in Figure~\ref{fig:composition}.
\end{example}

\begin{figure}[ht]
        \centering
                \includegraphics{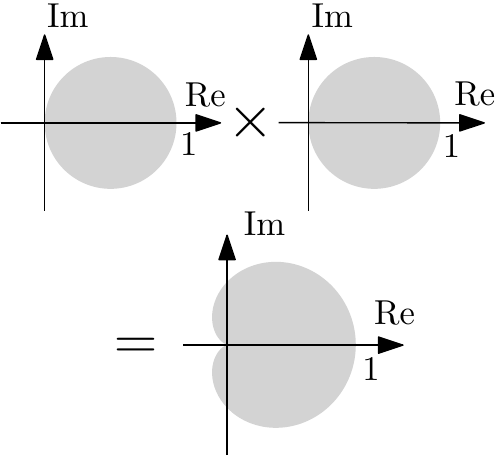}
                \caption{Illustration of the composition used in
                Example~\ref{ex:composition}.  The SRG on the right, a cardioid, is a
        bound on the SRG of a unit saturation cascaded with the transfer function
        $1/(s+1)$.}%
        \label{fig:composition}
\end{figure}

\section{Rolled-off passivity}\label{sec:rolled-off}

In this section, we define a property which captures systems which have a roll-off in gain as
their phase shift increases.  Given a pair of input signals, the system must either be incrementally passive,
with an incremental gain which is finite but in general large, or have small incremental gain.  This is formalized as follows.
\begin{definition}
        Let $H: L_2 \to L_2$ and $\mu, \gamma, \epsilon \geq 0$.  We say
        that $H$ is \emph{$\epsilon$-strongly incrementally $(\mu, \gamma)$-dissipative}
        if, for all $u_1, u_2 \in L_2$ and all $y_1 \in H(u_1)$, $y_2 \in H(u_2)$,
        either:
        \begin{IEEEeqnarray}{l}
                \norm{y_1 - y_2} \leq \mu \norm{u_1- u_2},\label{eq:small_gain}
        \end{IEEEeqnarray}
        or both:
                \begin{subequations}
        \begin{IEEEeqnarray}{*l+rCl}
                \arraycolsep = 1.4pt
                &\bra{u_1 - u_2}\ket{y_1 - y_2} &\geq& \epsilon\norm{u_1 - u_2}^2
                        \label{eq:passivity_one}\\
                 \text{and} &\norm{y_1 - y_2} &\leq& \gamma \norm{u_1- u_2},
                        \label{eq:passivity_two}
        \end{IEEEeqnarray}
                \end{subequations}
                or all of \eqref{eq:small_gain}, \eqref{eq:passivity_one} and \eqref{eq:passivity_two} hold.
        If $\epsilon = 0$, we simply say that $H$ is \emph{incrementally $(\mu, \gamma)$-dissipative}.
\end{definition}

Incremental $(\mu, \gamma)$-dissipativity is defined independently of the frequency spectra of signals, however it captures
those systems which are incrementally passive except for high frequency dynamics, when the 
system has low gain. Incremental $(\mu, \gamma)$-dissipativity is easily verified for systems with low-pass
dynamics whose passivity is destroyed by effects such as input saturation and small delays, as explored
further in the example of Section~\ref{sec:example}.

If $\gamma < \mu$, incremental $(\mu, \gamma)$-dissipativity reduces to an
incremental gain bound of $\mu$.  If $\mu = 0$, the property reduces to finite
incremental gain and input strict incremental positivity.

Incremental $(\mu, \gamma)$-dissipativity has an appealing graphical interpretation, developed in the following lemma.  
This lemma is especially useful as
it allows the property of incremental $(\mu, \gamma)$-dissipativity to be easily determined from the
SRG of a system.

\begin{lemma}\label{lem:srg}
        Let $\mu, \gamma > 0$, $\epsilon \geq 0$, and let $\mathcal{S}_{\mu,
        \gamma}^\epsilon$ be the class of
        operators which are $\epsilon$-strongly incrementally $(\mu,
        \gamma)$-dissipative.  Then
        \begin{IEEEeqnarray*}{rCl}
                \srg{\mathcal{S}_{\mu, \gamma}^\epsilon} = \mathcal{D}_1\cup\mathcal{D}_2,
        \end{IEEEeqnarray*}
        where 
        \begin{IEEEeqnarray*}{rCl}
                \mathcal{D}_1 &\coloneqq& \{z\, | \, z \in \C, |z| \leq \mu\},\\
                \mathcal{D}_2 &\coloneqq& \{z\, | \, z \in \C, |z| \leq
                \gamma, \Re{z} \geq \epsilon\},
        \end{IEEEeqnarray*}
        as shown in Figure~\ref{fig:srg}.
        Furthermore, $\mathcal{S}_{\mu, \gamma}^\epsilon$  is SRG-full.
\end{lemma}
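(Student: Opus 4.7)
The plan is to set up a direct dictionary between points in the SRG and the dissipativity data for individual input pairs. For any $H$ and $u_1 \neq u_2$ with $y_1 \in H(u_1)$, $y_2 \in H(u_2)$, each point $z = r e^{\pm j\theta}$ in $z_H(u_1,u_2)$ satisfies $|z| = \norm{y_1-y_2}/\norm{u_1-u_2}$ and $\Re(z) = r\cos\theta = \Re\bra{u_1-u_2}\ket{y_1-y_2}/\norm{u_1-u_2}^2$. Consequently, the small-gain alternative \eqref{eq:small_gain} holds for the pair $(u_1,u_2)$ if and only if $z_H(u_1,u_2) \subseteq \mathcal{D}_1$, and the joint alternative \eqref{eq:passivity_one}--\eqref{eq:passivity_two} holds if and only if $z_H(u_1,u_2) \subseteq \mathcal{D}_2$. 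The two conjugate elements of $z_H(u_1,u_2)$ share $|z|$ and $\Re(z)$, so the sign $\pm\theta$ never causes trouble.

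For $u_1 = u_2$, both \eqref{eq:small_gain} and \eqref{eq:passivity_two} reduce to $\norm{y_1 - y_2} = 0$, so membership in $\mathcal{S}_{\mu,\gamma}^\epsilon$ forces single-valuedness and in particular precludes $\infty \in \srg{H}$; conversely $\infty \nin \mathcal{D}_1 \cup \mathcal{D}_2$ anyway. Taking the union over all pairs, the pointwise dictionary yields the clean equivalence
\[
H \in \mathcal{S}_{\mu,\gamma}^\epsilon \quad\Longleftrightarrow\quad \srg{H} \subseteq \mathcal{D}_1 \cup \mathcal{D}_2.
\]
The $\Rightarrow$ direction, unioned over $H$, delivers $\srg{\mathcal{S}_{\mu,\gamma}^\epsilon} \subseteq \mathcal{D}_1 \cup \mathcal{D}_2$; the $\Leftarrow$ direction is precisely SRG-fullness once the set equality is known. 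Both conclusions of the lemma therefore reduce to the reverse set inclusion.

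To obtain $\mathcal{D}_1 \cup \mathcal{D}_2 \subseteq \srg{\mathcal{S}_{\mu,\gamma}^\epsilon}$, I would exhibit an explicit witness operator for each target point. For $z = re^{j\theta}$ in the target set with $\theta \in [0,\pi]$, define $H_z : L_2(\R^2) \to L_2(\R^2)$ by the memoryless map $u(t) \mapsto r R_\theta\, u(t)$, where $R_\theta$ is planar rotation by $\theta$ (with $R_0 = I$ and $R_\pi = -I$). The pointwise identity $v^\top R_\theta v = \cos(\theta)\norm{v}^2$, integrated in time, gives $\srg{H_z} = \{re^{\pm j\theta}\}$. Since this set lies in $\mathcal{D}_1 \cup \mathcal{D}_2$, the equivalence just established places $H_z$ in $\mathcal{S}_{\mu,\gamma}^\epsilon$, so $z \in \srg{\mathcal{S}_{\mu,\gamma}^\epsilon}$.

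I do not expect a significant obstacle: the essential observation is that a single SRG point encodes exactly the gain and the normalised real inner product of a particular input pair, so the ``either/or'' clause of the definition translates verbatim into the geometric condition $z \in \mathcal{D}_1 \cup \mathcal{D}_2$. The only care needed is in the degenerate cases---$u_1 = u_2$ handled above, $r = 0$ witnessed by the zero operator, and boundary angles $\theta \in \{0,\pi\}$ absorbed by $R_0 = I$ and $R_\pi = -I$---none of which requires any additional work.
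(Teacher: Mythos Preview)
Your proof is correct and follows essentially the same approach as the paper: establish the pointwise dictionary between SRG coordinates $(|z|,\Re z)$ and the dissipativity inequalities, deduce the biconditional $H \in \mathcal{S}_{\mu,\gamma}^\epsilon \iff \srg{H} \subseteq \mathcal{D}_1 \cup \mathcal{D}_2$, and then exhibit a witness operator for each target point. Your rotation witness $u \mapsto r R_\theta u$ on $L_2(\R^2)$ is simply the real-valued analogue of the paper's complex-multiplication witness $A_z(w) = z w$ on $L_2(\C)$; your explicit treatment of the $u_1 = u_2$ case and your packaging of SRG-fullness as the $\Leftarrow$ direction of the biconditional are, if anything, slightly tidier than the paper's version.
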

\begin{figure}[ht]
        \centering
        \includegraphics{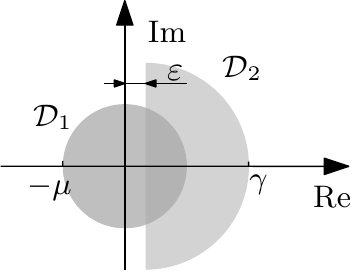}
        \caption{Graphical interpretation of $\epsilon$-strong incremental $(\mu,
                \gamma)$-dissipativity.  Lemma~\ref{lem:srg} shows that the SRG of the class of $\epsilon$-strongly
                incrementally $(\mu,
        \gamma)$-dissipative systems is $\mathcal{D}_1 \cup \mathcal{D}_2$.}
        \label{fig:srg}
\end{figure}

\begin{proof}
        We begin by showing $\srg{\mathcal{S}_{\mu, \gamma}^\epsilon} \subseteq
        \mathcal{D}_1\cup\mathcal{D}_2$.
        Let  $H \in \mathcal{S}_{\mu, \gamma}^\epsilon$ and $u_1, u_2 \in L_2$ be arbitrary inputs.  Then, by assumption, for all $y_1 \in H(u_1)$, $y_2
        \in H(u_2)$, either \eqref{eq:small_gain} is true, or \eqref{eq:passivity_one} and \eqref{eq:passivity_two} are
        true, or all three inequalities are true.  Suppose first that \eqref{eq:small_gain} is true.
        Then $\norm{y_1 - y_2}/\norm{u_1- u_2} \leq \mu$, so $z_H(u_1, u_2) \subseteq
        \{z\, | \,z \in \C, |z| \leq \mu\} = \mathcal{D}_1$.

        We now treat the second case.  Suppose that \eqref{eq:passivity_one} and \eqref{eq:passivity_two} are true. 
        Note that, for $z \in z_H(u_1, u_2)$ corresponding to outputs $y_1$, $y_2$,
        \begin{IEEEeqnarray}{rCl}
                \Re(z) = \frac{\bra{u_1 - u_2}\ket{y_1 - y_2}}{\norm{u_1 - u_2}^2}.
        \end{IEEEeqnarray}
        It then follows from Equation~\eqref{eq:passivity_one} that $z_H(u_1, u_2) \subseteq
        \{z\, | \,z \in \C, \Re(z) \geq \epsilon\}$.
        Equation~\eqref{eq:passivity_two} gives $z_H(u_1, u_2) \subseteq
        \{z\, | \,z \in \C, |z| \leq \mu\}$, so $z_H(u_1, u_2) \subseteq \{z\, | \,z
        \in \C, \Re(z) \geq \epsilon\} \cap \{z\, | \,z \in \C, |z| \leq \mu\} =
        \mathcal{D}_2$.   Combining the two cases, we have $z_H(u_1, u_2) \subseteq
        \mathcal{D}_1\cup\mathcal{D}_2$.
        Since $u_1$ and $u_2$ were arbitrary, it follows that 
                $\srg{\mathcal{S}_{\mu, \gamma}^\epsilon} \subseteq \mathcal{D}_1\cup\mathcal{D}_2$.

        To show the opposite inclusion, let $z \in 
        \mathcal{D}_1\cup\mathcal{D}_2$ be arbitrary, and consider $A_z:L_2(\C) \to
        L_2(\C)$ defined by $A_z(w) = |z|e^{j\arg(z)}w$.  A straightforward calculation shows that $\srg{A_z}
= \{z, \bar{z}\}$.
        If we can show that $A_z \in \mathcal{S}_{\mu, \gamma}^\epsilon$, it follows that
                $\srg{\mathcal{S}_{\mu, \gamma}^\epsilon} \supseteq \mathcal{D}_1\cup\mathcal{D}_2$.
The fact that $A_z \in \mathcal{S}_{\mu, \gamma}^\epsilon$ is shown using the
following argument, which also proves SRG-fullness of $\mathcal{S}_{\mu,
\gamma}^\epsilon$.

        Take an arbitrary operator $H$ which
        satisfies $\srg{H} \subseteq \mathcal{D}_1\cup\mathcal{D}_2$.
        Take $z \in \srg{H}$, and let $u_1, u_2, y_1, y_2$ be any inputs and outputs
        that correspond to the point $z$.  If $z \in \mathcal{D}_1$, then $u_1, u_2,
        y_1, y_2$ obey \eqref{eq:small_gain}. If $z \in \mathcal{D}_2$, then $u_1, u_2,
        y_1, y_2$ obey \eqref{eq:passivity_one} and \eqref{eq:passivity_two}.  It follows that $H \in
        \mathcal{S}_{\mu, \gamma}^\epsilon$.
\end{proof}

We conclude this section with two examples of incrementally $(\mu,
\gamma)$-dissipative systems.
\begin{example}\label{ex:LTI-mg}
        We begin by revisiting the LTI transfer function of Example~\ref{ex:LTI}, $G
        = e^{-s}/(s+1)$.
        Its SRG is shown again in the left of Figure~\ref{fig:example_srgs}.  We can
        read directly from the SRG that this system is $(\mu_1,
        \gamma_1)$-dissipative, with $\mu_1 = 0.7581$ and $\gamma_1 = 1$.  The
        circles with centres at the origin and radii of $\mu_1$ and $\gamma_1$ are
        shown as dashed lines in Figure~\ref{fig:example_srgs}.  
\end{example}
\begin{figure}[ht]
        \centering
        \includegraphics{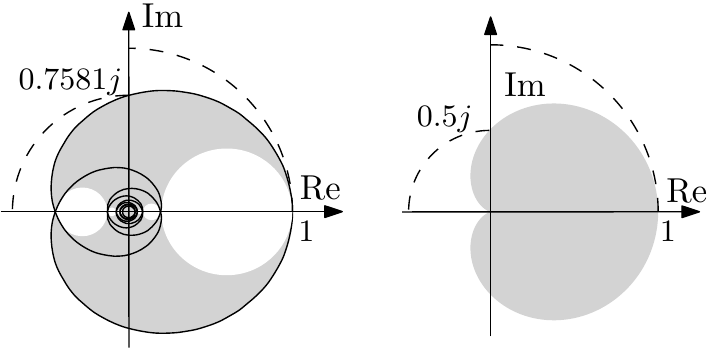}
        \caption{SRGs for Examples~\ref{ex:LTI-mg} (left) and~\ref{ex:sat-mg}
        (right), showing incremental $(\mu, \gamma)$-dissipativity, where $\mu$ and $\gamma$ are
        determined by the radii of the dashed circles.}%
        \label{fig:example_srgs}
\end{figure}
\begin{example}\label{ex:sat-mg}
        We now revisit the system of Example~\ref{ex:composition} -- the cascade of
        the transfer function $1/(s+1)$ with a unit saturation.  In
        Example~\ref{ex:composition}, a bounding approximation of the SRG of this
        system was obtained -- this is repeated on the right of
        Figure~\ref{fig:example_srgs}.  It can be read directly from the figure that
        this system is incrementally $(\mu_2, \gamma_2)$-dissipative, with $\mu_2 = 0.5$ and
        $\gamma_2 = 1$.  Again, the relevant circles are shown in the figure as
        dashed lines.  For any $\bar \mu >
        \mu_2$, it can be verified graphically that there exists an $\epsilon > 0$
        such that this system is $\epsilon$-strongly incrementally $(\bar \mu,
        \gamma_2)$-dissipative.
\end{example}

\section{A rolled-off passivity theorem}\label{sec:theorem}

\begin{theorem}\label{thm:small_passivity}
        Let $H_1, H_2: L_2 \to L_2$.  Suppose there exist $\mu_1, \mu_2,
        \gamma_1, \gamma_2 \geq 0$ and $\epsilon > 0$ such that $H_1$ is incrementally $(\mu_1,
        \gamma_1)$-dissipative
        and $H_2$ is $\epsilon$-strongly incrementally $(\mu_2, \gamma_2)$-dissipative.
        Then the feedback interconnection of $H_1$ and $H_2$ shown in
        Figure~\ref{fig:sym_fb} maps $L_2$ to $L_2$ and has finite incremental gain from $u$ to $y$ if
        \begin{IEEEeqnarray*}{c}
                \mu_1\mu_2 < 1, \qquad
                \mu_1\gamma_2 < 1, \qquad
                \mu_2\gamma_1 < 1.
        \end{IEEEeqnarray*}
\end{theorem}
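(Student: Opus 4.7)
The plan is to fix two input trajectories $(u_1, u_2)$ and $(u_1', u_2')$ of the feedback loop of Figure~\ref{fig:sym_fb}, and, writing $\Delta v \coloneqq v - v'$ for any signal $v$, to bound the output increments in terms of the input increments. The loop equations give $\Delta e_1 = \Delta u_1 - \Delta y_2$ and $\Delta e_2 = \Delta u_2 + \Delta y_1$. For each $i$, the hypothesis on $H_i$ places $(\Delta e_i, \Delta y_i)$ in one of two regimes: a \emph{small-gain regime}, in which $\norm{\Delta y_i} \leq \mu_i \norm{\Delta e_i}$, or a \emph{passivity regime}, in which $\norm{\Delta y_i} \leq \gamma_i \norm{\Delta e_i}$ holds together with the (strict, in the case of $H_2$) incremental positivity inequality. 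I would proceed by a case analysis over the four combinations of regimes.

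The three cases in which at least one operator sits in the small-gain regime reduce to the classical small-gain manipulation. If $\norm{\Delta y_i} \leq a_i \norm{\Delta e_i}$ for $i = 1, 2$, chaining through the loop equations yields $(1 - a_1 a_2) \norm{\Delta e_1} \leq \norm{\Delta u_1} + a_2 \norm{\Delta u_2}$, where $(a_1, a_2)$ takes the value $(\mu_1, \mu_2)$, $(\mu_1, \gamma_2)$, or $(\gamma_1, \mu_2)$. The three product conditions of the theorem guarantee $1 - a_1 a_2 > 0$ in the corresponding case, giving a linear bound on all internal signals in terms of $\norm{\Delta u_1}$ and $\norm{\Delta u_2}$.

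The main obstacle is the fourth case, where both operators lie in the passivity regime: the product $\gamma_1 \gamma_2$ need not be below one, so a pure gain chain fails and the incremental positivity inequalities must be used. The key identity, obtained by substituting the loop equations into $\bra{\Delta e_1}\ket{\Delta y_1} + \bra{\Delta e_2}\ket{\Delta y_2}$ and noting that the $\bra{\Delta y_1}\ket{\Delta y_2}$ cross terms cancel, is $\bra{\Delta e_1}\ket{\Delta y_1} + \bra{\Delta e_2}\ket{\Delta y_2} = \bra{\Delta u_1}\ket{\Delta y_1} + \bra{\Delta u_2}\ket{\Delta y_2}$. Combining incremental positivity on the left, Cauchy--Schwarz and the gain bounds $\norm{\Delta y_i} \leq \gamma_i \norm{\Delta e_i}$ on the right, and the loop estimate $\norm{\Delta e_1} \leq \norm{\Delta u_1} + \gamma_2 \norm{\Delta e_2}$, produces a quadratic inequality in $\norm{\Delta e_2}$ of the form $\epsilon \norm{\Delta e_2}^2 - A \norm{\Delta e_2} - B \leq 0$, with $A$ linear and $B$ quadratic in $\norm{\Delta u_1}, \norm{\Delta u_2}$. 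Strict positivity of $\epsilon$ is used precisely here to solve the quadratic and obtain a bound on $\norm{\Delta e_2}$ linear in $\norm{\Delta u_1}, \norm{\Delta u_2}$; the remaining signals are then bounded via the loop equations.

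Taking the maximum of the constants produced by the four cases yields the claimed incremental gain bound from $u$ to $y$. The mapping property $L_2 \to L_2$ follows by applying the same bounds to causal truncations and passing to the limit in the truncation horizon.
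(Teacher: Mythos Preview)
Your four--case signal--space argument is a genuinely different route from the paper's proof. The paper never touches individual loop equations or splits into regimes; instead it works entirely with Scaled Relative Graphs, plotting $\srg{\mathcal{S}_{\mu_1,\gamma_1}^{-1}}$ against $\srg{-\tau\mathcal{S}_{\mu_2,\gamma_2}^\epsilon}$, reading off the minimum separation
\[
r_\tau=\min\Bigl\{\epsilon,\ \tfrac{1}{\mu_1}-\tau\mu_2,\ \tfrac{1}{\mu_1}-\tau\gamma_2,\ \tfrac{1}{\gamma_1}-\tau\mu_2\Bigr\},
\]
and invoking the inversion/summation rules for SRGs to conclude an incremental gain bound of $1/r_\tau$ for the closed loop. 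Your approach is more elementary and shows directly how each of the three product hypotheses is consumed by one of the ``mixed'' cases, with the strict $\epsilon>0$ absorbing the fourth case via the quadratic inequality; the paper's approach instead yields an explicit stability margin and makes the geometric picture transparent. The a~priori bound part of your argument is sound.

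The gap is in your last sentence. You propose to get the $L_2\to L_2$ mapping property by ``applying the same bounds to causal truncations and passing to the limit''. That does not work here for two reasons. First, the theorem makes no causality assumption on $H_1$ or $H_2$ (the paper says so explicitly), so a truncation argument is not available in general. Second, and more fundamentally, incremental $(\mu,\gamma)$-dissipativity is stated only for pairs of signals in $L_2$: for a truncated input pair you have no control over which regime the operator sits in, nor do the inequalities \eqref{eq:small_gain}--\eqref{eq:passivity_two} even apply on $L_{2e}$. Your case split therefore cannot be run on truncations. The paper handles well-posedness differently: it scales the feedback by $\tau\in(0,1]$, observes that $r_\tau>0$ for all such $\tau$, and argues that $\tau\mapsto 1/r_\tau$ is continuous, so the finite incremental gain of the open loop persists as feedback is introduced. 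If you want to keep your elementary approach, you will need a separate argument of this homotopy type (or an explicit fixed-point construction) to establish existence of closed-loop trajectories in $L_2$ before your four-case bound can be invoked.
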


\begin{figure}[ht]
        \centering
        \includegraphics{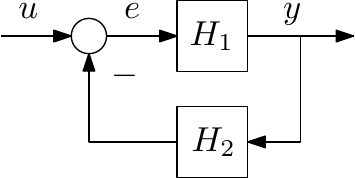}
        \caption{Negative feedback interconnection of $H_1$ and $H_2$.}%
        \label{fig:sym_fb}
\end{figure}

Note that setting $\mu_1 = \mu_2 = 0$  and letting $\gamma_1, \gamma_2 \to \infty$ recovers the incremental passivity theorem, and
setting $\gamma_1 < \mu_1$, $\gamma_2 < \mu_2$ recovers the incremental small gain theorem, for
operators on $L_2$ \cite{Desoer1975}.  The focus for the remainder of this paper will
be cases where neither the incremental passivity theorem nor the incremental small
gain theorem apply.  We do not make any assumptions about causality
of operators, and neither do we give any guarantees - causality must be treated
separately.

Theorem~\ref{thm:small_passivity} follows as a corollary of \cite[Thm.
2]{Chaffey2021c}.  In the remainder of this section, we give a direct proof.  
The proof proceeds by taking bounding approximations of the SRGs of
$H_1$ and $H_2$, and using them to construct a bounding approximation of the SRG of
the closed loop operator.  This approximation is a bounded region in the complex
plane, from which we conclude finite
incremental gain, via Proposition~\ref{prop:finite_gain}.

Given two arbitrary systems $H_1$ and $H_2$, it is not necessarily true that their
feedback interconnection is admissible, that is, a well-defined operator
on $L_2$.  If the systems satisfy the conditions of
Theorem~\ref{thm:small_passivity}, however, admissibility is guaranteed.  This is
proven using a homotopy argument similar to Megretski
and Rantzer \cite{Megretski1997}.  We scale the feedback operator by a gain $\tau \in
(0, 1]$, and show that the mapping from $\tau$ to the closed loop incremental
gain is continuous.  This shows the finite incremental gain of $H_1$ is preserved as
the feedback is introduced, and in particular, the closed loop system continues to
map $L_2$ into $L_2$.  Note that the
standard form of the incremental passivity theorem \cite[Thm. 30, p.
184]{Desoer1975} requires admissibility as an assumption.  The extra strength
of Theorem~\ref{thm:small_passivity} comes, loosely speaking, from the additional
assumption that both operators $H_1$ and $H_2$ have finite incremental gain.

\begin{proof}[Proof of Theorem~\ref{thm:small_passivity}]
        Let $\mu_1, \mu_2, \gamma_1, \gamma_2$ and $\epsilon$ satisfy the conditions of the theorem.
        Let $\mathcal{S}_{\mu_1, \gamma_1}$ be the class of operators which are
        incrementally $(\mu_1,
        \gamma_1)$-dissipative, and $\mathcal{S}_{\mu_2, \gamma_2}^\epsilon$ be the class of operators which are
        $\epsilon$-strongly incrementally $(\mu_2, \gamma_2)$-dissipative.

        The class of operators formed by the negative feedback interconnection of
        $H_1 \in \mathcal{S}_{\mu_1, \gamma_1}$ and $\tau H_2 \in \tau \mathcal{S}_{\mu_2,
        \gamma_2}^\epsilon$ (Figure~\ref{fig:sym_fb}), is given by
        \begin{IEEEeqnarray}{rCl}
                (\mathcal{S}_{\mu_1, \gamma_1}^{-1} + \tau \mathcal{S}_{\mu_2,
                \gamma_2}^\epsilon)^{-1}.\label{eq:class}
        \end{IEEEeqnarray}
        Let $\tau \in (0, 1]$.
        We now plot the SRGs of $\mathcal{S}_{\mu_1, \gamma_1}^{-1}$ and
        $-\tau \mathcal{S}_{\mu_2, \gamma_2}^\epsilon$, using Lemma~\ref{lem:srg} and
        Propositions~\ref{prop:gain}, \ref{prop:inversion} and \ref{prop:summation}.
        \begin{center}
                \includegraphics{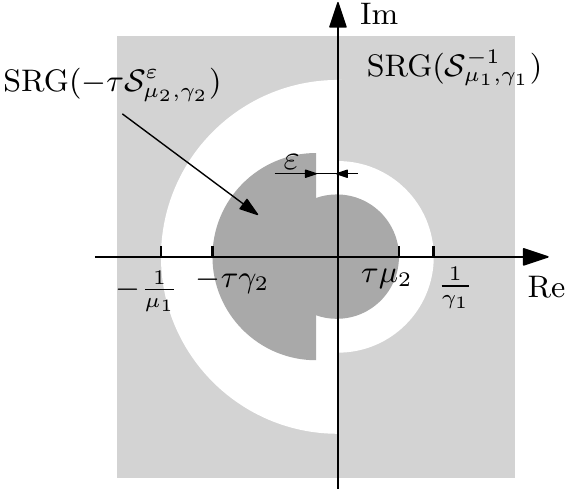}
        \end{center}
        The shortest distance between the two SRGs, that is, $\min \{|z_1 - z_2|\}$
        over all $z_1 \in \srg{\mathcal{S}_{\mu_1, \gamma_1}^{-1}}$ and all
$z_2 \in \srg{-\tau\mathcal{S}_{\mu_2, \gamma_2}^\epsilon}$, is given by
        \begin{IEEEeqnarray*}{rCl}
                r_\tau \coloneqq \min\left\{\epsilon, \frac{1}{\mu_1} - \tau\mu_2,
                \frac{1}{\mu_1} - \tau\gamma_2, \frac{1}{\gamma_1} - \tau\mu_2\right\}.
        \end{IEEEeqnarray*}
        This is determined from the figure above, allowing for the fact that one or
        both SRGs may have $\mu_j > \gamma_j$.
        By the assumption of the theorem, all of these values are positive for $\tau
        \in (0, 1]$. Applying
        Propositions~\ref{prop:gain} (with $\alpha = -1$) and~\ref{prop:summation}, it follows that $\srg{\mathcal{S}_{\mu_1, \gamma_1}^{-1} +
        \tau\mathcal{S}_{\mu_2, \gamma_2}^\epsilon} = \srg{\mathcal{S}_{\mu_1,
        \gamma_1}^{-1}} -
        \srg{-\tau\mathcal{S}_{\mu_2, \gamma_2}^\epsilon}$ is bounded away from zero by a distance $r_\tau$. Applying
        Propositions~\ref{prop:finite_gain}
        and \ref{prop:inversion} allows us to conclude a finite incremental gain bound of
        $1/r_\tau$ for the class of operators \eqref{eq:class}, as illustrated below.
        \begin{center}
                \includegraphics{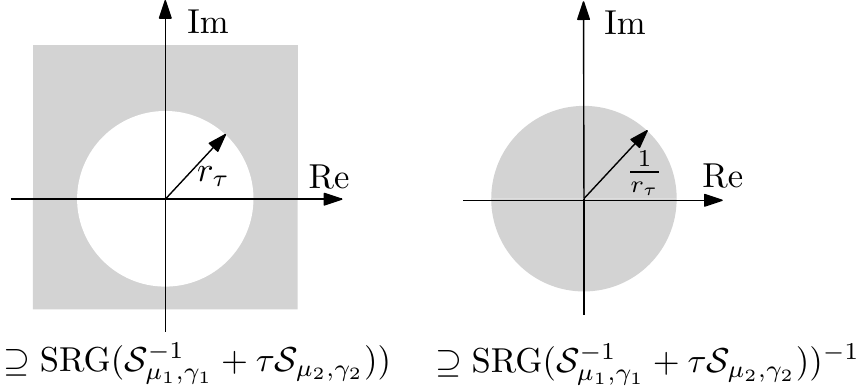}
        \end{center}

        Let $\epsilon \geq 0$ be less than $r_\tau$. Then there is a
        $\delta(\epsilon) \geq 0$, with $\delta(0) = 0$, such that $\tau +
        \delta(\epsilon) \leq 1$ and
        the distance between $\srg{\mathcal{S}_{\mu_1, \gamma_1}^{-1}}$ and 
        $\srg{-\tau\mathcal{S}_{\mu_2, \gamma_2}^\epsilon}$ decreases to $r_{\tau +
        \delta(\epsilon)} = r_\tau - \epsilon$.  The closed loop incremental gain is
        then bounded by $1/(r_\tau - \epsilon)$.  This shows that the scaling factor
        $\tau$ maps continuously to the closed loop gain, so the finite incremental
        gain of the forward loop is preserved as feedback is introduced, and the
        closed loop continues to map $L_2 \to L_2$.  The theorem then follows by
        taking $\tau = 1$.
\end{proof}

\section{Feedback example}\label{sec:example}

\begin{example}\label{ex:feedback}

\begin{figure}[hb]
        \centering
        \includegraphics{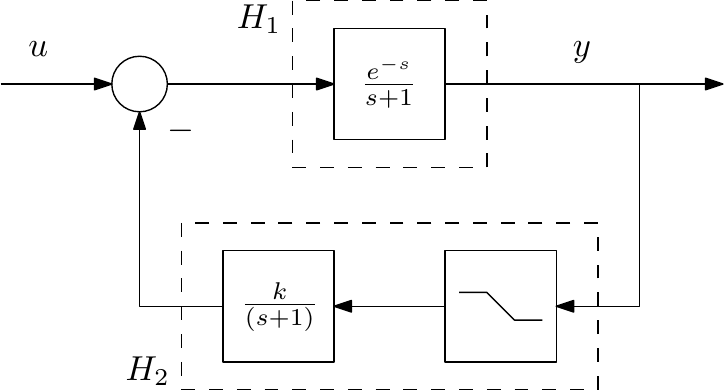}
        \caption{Feedback system studied in Example~\ref{ex:feedback}.}%
        \label{fig:example_system}
\end{figure}

Consider the feedback system shown in Figure~\ref{fig:example_system}.  The forward
path $H_1$ consists of a delayed first order lag. The feedback path $H_2$ consists of a
unit saturation cascaded with a first order lag and static gain $k$.  

The SRG of
$H_1$ is derived in Example~\ref{ex:LTI}, and $H_1$ is shown to be incrementally $(\mu_1,
\gamma_1)$-dissipative, with $\mu_1 = 0.7581$ and $\gamma_1 = 1$, in
Example~\ref{ex:LTI-mg}.  

The SRG of $H_2$ is obtained by applying Proposition~\ref{prop:gain} to the SRG obtained 
in Example~\ref{ex:composition}, to give the cardioid 
\begin{IEEEeqnarray*}{rCl}
        \{kre^{j\phi}\; |\; r \leq \frac{1}{2}(1 + \cos(\phi))\},
\end{IEEEeqnarray*}
that is, the SRG of Example~\ref{ex:composition} inflated by $k$.  It then follows
from Example~\ref{ex:sat-mg} that $H_2$ is $\epsilon$-strongly incrementally $(\mu_2,
\gamma_2)$-dissipative, with $\mu_2 > k/2$,
        $\gamma_2 = k$ and $\epsilon > 0$.

Note that neither $H_1$ nor $H_2$ is incrementally positive (their SRGs are not 
contained in the right half plane), nor do they obey the incremental small gain 
condition for $k > 1$ (the product of the maximum moduli of their SRGs exceeds 1).
However, it follows from Theorem~\ref{thm:small_passivity} that the feedback system
has finite incremental $L_2$ gain for all $0 < k < 1.3191$.
\end{example}

\section{Conclusions}

Theorem~\ref{thm:small_passivity} guarantees finite incremental gain of the negative 
feedback interconnection of two systems, where, for every pair of input/output
trajectories, the 
systems satisfy either an incremental small gain condition or an incremental 
passivity condition.  This property, which we call incremental $(\mu, \gamma)$-dissipativity, 
captures systems which have either small gain or small phase shift (or both),
and includes systems with low-pass dynamics whose passivity is destroyed at high
frequencies by effects such as saturation and delay. 

A primary advantage of incremental $(\mu, \gamma)$-dissipativity is that it
can be verified graphically from the SRG of a system, as shown in
Lemma~\ref{lem:srg}.  This makes the property both intuitive, and simple to verify.

\section*{Acknowledgements}
The research leading to these results has received funding from the European
Research Council under the Advanced ERC Grant Agreement Switchlet n.~670645.
The author gratefully acknowledges many insightful discussions with Fulvio Forni and
Rodolphe Sepulchre, and useful suggestions on the manuscript from Alberto Padoan and
the anonymous reviewers.

\bibliographystyle{elsarticle-num-names}
\bibliography{srg}
\end{document}